\date{15 Mar., 2012}
\title[An excess theorem for spherical $2$-designs]
{An excess theorem for spherical $2$-designs}
\author[H.~Kurihara]
{Hirotake Kurihara}
\thanks{The author is supported by JSPS Research Fellowship.}
\address{Mathematical Institute,
Tohoku University,
Aramaki-Aza-Aoba 6-3,
Aoba-ku,
Sendai 980-8578,
Japan
}
\email{sa9d05@math.tohoku.ac.jp}
\subjclass[2010]{Primary~05E30, Secondary~05B30}
\keywords{$Q$-polynomial association scheme, distance set,
spherical design.}
\numberwithin{equation}{section}
\newtheorem{thm}{Theorem}[section]
\newtheorem*{thm*}{Theorem}
\newtheorem{lem}[thm]{Lemma}
\theoremstyle{definition}
\newtheorem{df}[thm]{Definition}
\theoremstyle{remark}
\newtheorem{rem}[thm]{Remark}
\newcommand{\R}{\mathbb{R}}
\def\set#1#2{\{#1\,|\,#2\}}
\newcommand{\trans}[1]{{}^t\hspace{-0.4ex}#1}
\newcommand{\rank}{\mathrm{rank}\,}
\newcommand{\tr}{\mathrm{tr}\,}
\newcommand{\Sp}{\mathrm{Spec}\,}
\newcommand{\pol}{\mathrm{Pol}}
\newcommand{\Harm}{\mathrm{Harm}}
\newcommand{\Span}{\mathrm{Span}}
\newcommand{\aprod}[2]{
\langle #1,#2 \rangle
}
\newcommand{\mprod}[2]{
\langle #1,#2 \rangle_{\mathrm{m}}
}
\begin{document}
\begin{abstract}
We give an excess theorem for spherical $2$-designs.
This theorem is a dual version
of the spectral excess theorem for graphs,
which gives a characterization of
distance-regular graphs,
among regular graphs in terms of the eigenvalues
and the excess.
Here we give a characterization
of $Q$-polynomial association schemes
among spherical $2$-designs.
\end{abstract}

\maketitle

\section{Introduction}
In graph theory, distance-regularity is considered an
important concept,
and there are many papers on distance-regular graphs.
The reader is referred to
Brouwer--Cohen--Neumaier~\cite{Brouwer1989dg}.
Especially the methods to check whether a given graph is
distance-regular or not are of interest.
One of these methods is
the \emph{spectral excess theorem}~\cite{Fiol1997fla,
Dam2008set,Fiol2010spo}.
The purpose of the present paper is to give a
``dual version'' of this theorem.

We briefly recall the spectral excess theorem.
For the rest of this section,
$\Gamma =(X,E)$ denotes an undirected, simple,
connected and finite graph of order $n$
and diameter $D(\Gamma)=D$.
The path distance between two vertices $x$ and $y$
is represented by $\partial (x,y)$.
Let $\Gamma _i(x)$ be the set of vertices at distance $i$ from $x$,
for $0\le i\le D$.
The $i$-th \emph{distance matrix} $A_i$ of $\Gamma$ is defined to be
the matrix with rows and columns indexed by $X$
whose $(x,y)$-entries are
\[
(A_i)_{x,y}=
\begin{cases}
1 & \text{if $y\in \Gamma _i(x)$,}\\
0 & \text{otherwise}.
\end{cases}
\]
In particular $A_1$ is the \emph{adjacency matrix} of
$\Gamma$ and denoted simply by $A$.
The {\it spectrum} of $\Gamma$ is the set of distinct eigenvalues
of $A$ together with their multiplicities,
and it will be denoted by
$\Sp (\Gamma)
=
\{\theta _0^{m_0},\theta _1^{m_1},\ldots ,\theta _d^{m_d}\}$.
Then we have $D\le d$.
Suppose $\Gamma$ is a connected $k$-regular graph.
Then $k$ is an eigenvalue of $\Gamma$ with multiplicity 1.
Hence we may put
$\theta_0=k$
and
$m_0=1$.
Let $Z(t)=\prod^d_{i=0}(t-\theta_i)\in \R[t]$,
and we consider
an inner product on $\R[t]/(Z)$ by,
for $p,q\in \R[t]/(Z)$,
\[
\aprod{p}{q}
=
\frac{1}{n}
\sum^{d}_{i=0}m_i p(\theta_i)q(\theta_i).
\]
The \emph{predistance polynomials} $p_0,p_1,\ldots ,p_d$ of $\Gamma$
are the unique polynomials
satisfying $\deg p_i=i$
and
$\aprod{p_i}{p_j}=\delta_{i,j} p_i(\theta_0)$
for $i,j \in \{0,1,\ldots ,d\}$, where
$\delta _{i,j}$ denotes Kronecker's delta function.
The graph $\Gamma =(X,E)$ is called
{\it distance-regular}
if $D=d$, and $A_i=p_i(A)$
for each $i\in \{0,1,\ldots ,d\}$.

It is well known that regularity of a graph is determined by
the spectrum of the graph,
cf.~Cvetkovi{\'c}--Doob--Sachs~\cite[Theorem 3.22]{Cvetkovic1980sog}.
On the other hand,
distance-regularity of a graph is in general not determined
only by the spectrum of the graph.
The spectral excess theorem states 
a condition that a connected regular graph
is distance-regular.
Here the excess of a vertex $x$ is $|\Gamma _d(x)|$.
This result was first proved by Fiol--Garriga~\cite{Fiol1997fla}
using a local approach.\ 
Later, Van Dam~\cite{Dam2008set} and
Fiol--Gago--Garriga~\cite{Fiol2010spo}
gave the excess theorem using mean excess,
which used a global approach.
The following is the spectral excess theorem due to
Fiol--Gago--Garriga~\cite{Fiol2010spo}.
\begin{thm}
Let $\Gamma$ be a regular graph with $d+1$
distinct eigenvalues and spectrally maximum diameter
$D=d$.
Then
$\frac{1}{n}\sum_{x\in X}|\Gamma_d(x)| \le p_d(\theta _0)$.
Moreover equality is attained if and only if
$\Gamma$ is a distance-regular graph.
\end{thm}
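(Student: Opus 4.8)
The plan is to transport the whole problem into the adjacency algebra $\mathcal{A}=\Span\{A^0,A^1,\dots,A^d\}$ and to replace the polynomial inner product by the matrix inner product $\mprod{M}{N}=\frac1n\tr(M\trans{N})=\frac1n\sum_{x,y}M_{x,y}N_{x,y}$. Writing $A=\sum_{i=0}^d\theta_iE_i$ for the spectral decomposition and using $\tr(E_iE_j)=\delta_{i,j}m_i$, one checks that $p\mapsto p(A)$ is an isometry from $(\R[t]/(Z),\aprod{\cdot}{\cdot})$ onto $(\mathcal{A},\mprod{\cdot}{\cdot})$; in particular $\{p_i(A)\}_{i=0}^d$ is an orthogonal basis of $\mathcal{A}$ with $\mprod{p_i(A)}{p_i(A)}=p_i(\theta_0)$. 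The first observation is that the mean excess is already a squared norm: since $A_d$ has $0/1$ entries, $\frac1n\sum_{x\in X}|\Gamma_d(x)|=\frac1n\sum_{x,y}(A_d)_{x,y}=\mprod{A_d}{A_d}$.

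Next I would produce the decisive identity $\mprod{A_d}{p_d(A)}=\mprod{A_d}{A_d}$. Because $(A^j)_{x,y}=0$ whenever $\partial(x,y)=d>j$, the matrix $A_d$ is orthogonal to $A^0,\dots,A^{d-1}$, hence to $p_0(A),\dots,p_{d-1}(A)$. Let $H=\sum_{i=0}^d p_i$; expanding the preimage of the all-ones matrix $J=nE_0$ (legitimate since $\theta_0=k$ has multiplicity one) in the orthogonal basis $\{p_i\}$ shows every coefficient equals $1$, so $H(A)=J$. Therefore $\mprod{A_d}{p_d(A)}=\mprod{A_d}{H(A)}=\mprod{A_d}{J}=\frac1n\sum_{x,y}(A_d)_{x,y}=\mprod{A_d}{A_d}$. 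Writing $\mu=\mprod{A_d}{A_d}$ for the mean excess (note $\mu>0$ because $D=d$) and applying Cauchy--Schwarz, $\mu=\mprod{A_d}{p_d(A)}\le\|A_d\|_{\mathrm m}\,\|p_d(A)\|_{\mathrm m}=\sqrt{\mu}\,\sqrt{p_d(\theta_0)}$, and dividing by $\sqrt{\mu}$ and squaring gives $\mu\le p_d(\theta_0)$, the asserted inequality.

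Finally, equality forces $A_d$ and $p_d(A)$ to be proportional, and evaluating $\mprod{A_d}{p_d(A)}=\mu=p_d(\theta_0)$ pins the constant, so $A_d=p_d(A)$. From here I would run a downward induction to recover all distance matrices. The predistance polynomials obey a three-term recurrence $t\,p_i=c_{i+1}p_{i+1}+a_ip_i+b_{i-1}p_{i-1}$ (multiplication by $t$ is self-adjoint for $\aprod{\cdot}{\cdot}$), with nonzero coefficients, so it can be solved for $p_{i-1}$. Feeding in $p_i(A),p_{i+1}(A)$ and using that $A$ times a matrix supported on pairs at distance $i$ is supported on distances $i-1,i,i+1$, while $p_{i-1}(A)$ is supported on distances $\le i-1$ for degree reasons, the distance-$i$ and distance-$(i+1)$ parts of the recurrence must cancel; this shows inductively that each $p_i(A)$ is supported exactly on the distance-$i$ pairs. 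Since these supports are disjoint and $\sum_{i=0}^d p_i(A)=J=\sum_{i=0}^d A_i$, comparing distance-$i$ parts yields $A_i=p_i(A)$ for every $i$, i.e.\ $\Gamma$ is distance-regular; the converse (a distance-regular graph has $A_i=p_i(A)$ and hence attains equality) is immediate from the same isometry. I expect this last implication to be the main obstacle: the Cauchy--Schwarz step only delivers $A_d=p_d(A)$, and the work lies in showing that controlling the single outermost distance matrix propagates all the way down, for which the disjointness of the distance-matrix supports together with the nonvanishing of the preintersection numbers is essential.
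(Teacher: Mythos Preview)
Your argument is correct. Note, however, that the paper does not give its own proof of this statement: Theorem~1.1 is quoted from Fiol--Gago--Garriga~\cite{Fiol2010spo} as background, and the paper's contribution is the dual Theorem~\ref{thm:excess theorem main}. That said, your proof is precisely the standard global argument from \cite{Dam2008set,Fiol2010spo}, and it mirrors almost exactly what the paper does in the dual setting.

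The parallel is worth spelling out. Your Cauchy--Schwarz step is equivalent to the paper's orthogonal-projection step: the paper projects $F_s$ onto $\mathfrak{A}^\ast$ and uses $\mprod{F_s}{F_s}-\mprod{\tilde{F_s}}{\tilde{F_s}}\ge 0$, which is the same inequality you obtain, just phrased as ``projection shrinks norm'' rather than Cauchy--Schwarz. Your identity $\mprod{A_d}{p_d(A)}=\mprod{A_d}{A_d}$, proved via $H(A)=J$ and orthogonality of $A_d$ to low powers of $A$, is exactly dual to the paper's computation \eqref{eq:proj_F_s}, which uses $\frac{1}{n}H((nG)^\circ)=I$ and Lemma~\ref{lem:F_j G^i=0}. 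Finally, your downward induction from $A_d=p_d(A)$ via the three-term recurrence is the graph-side analogue of the paper's Lemma~\ref{lem:F_s<->Q-poly}, whose proof has the same structure: use the recurrence \eqref{eq:q_i recurrence} to peel off one index at a time, exploiting that $p_{i-1}(A)$ (resp.\ $\frac{1}{n}q_{i-1}((nG)^\circ)$) is automatically supported on ``distance $\le i-1$'' for degree reasons, while the other side of the recurrence constrains the support from below. So your write-up is essentially the primal version of the paper's own dual proof.
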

Every distance-regular graph has the structure of
a \emph{$P$-polynomial association scheme}.
The notation and some properties about association schemes are given
in Section~\ref{sec:AS}.
The \emph{$Q$-polynomial property} is 
an algebraically dual concept of the $P$-polynomial property.

In this paper we consider an excess theorem related to
$Q$-polynomial association schemes.
Here, the ``dual'' concept of regular graphs is
corresponding to properties of
spherical $2$-designs.
In Section~\ref{sec:pol_sp},
the definition of spherical designs is given,
and we consider the polynomial functions on 
spherical 2-designs.
Our main result is given and proved in Section~\ref{sec:excess}.

\section{Association schemes}
\label{sec:AS}
We begin with a review of basic definitions concerning
association schemes.
The reader is referred to Bannai--Ito~\cite{Bannai1984aci} for the
background material.

A {\it symmetric association scheme}
$\mathfrak{X}=(X,\{R_i\}^{d}_{i=0})$
consists of a finite set $X$ and a set $\{R_i\}^{d}_{i=0}$
of non-empty binary relations on $X$ satisfying:
\begin{enumerate}
\item $R_0=\set{(x,x)}{x\in X}$,
\item $\{R_i\}^{d}_{i=0}$ is a partition of $X\times X$,
\item $\trans{R_i}=R_i$ for each $i\in \{0,1,\ldots ,d\}$,
where $\trans{R_i}=\set{(y,x)}{(x,y)\in R_i}$,
\item the numbers $|\set{z\in X}{\text{$(x,z)\in R_i$ and $(z,y)\in
R_j$}}|$ are constant whenever $(x,y)\in R_k$, for each $i,j,k\in
\{0,1,\ldots ,d\}$.
\end{enumerate}
The numbers $|\set{z\in X}{\text{$(x,z)\in R_i$ and
$(z,y) \in R_j$}}|$ are called the {\it intersection numbers}
and denoted by $p^{k}_{i,j}$.
Let $M_X(\R)$ denote the algebra of matrices over
the real field $\R$ with rows and columns indexed by $X$.
The $i$-th {\it adjacency matrix} $A_i$ in $M_X(\R)$ of
$\mathfrak{X}$ is defined by
\[(A_i)_{x,y}=\begin{cases}
1 & \text{if $(x,y)\in R_i$,}\\
0 & \text{otherwise.}
\end{cases}\]
From the definition of association schemes, it follows that
\begin{enumerate}[{(A}1{)}]
\item
$A_0=I$,
where $I$ is the identity matrix,
\item 
$A_0+A_1+\cdots +A_d=J$,
where $J$ is the all-one matrix,
and $A_i \circ A_j=\delta_{i,j}A_i$
for $i,j\in \{0,1,\ldots ,d\}$,
where
$\circ$ denotes the Hadamard product,
that is,
the entry-wise matrix product,
\item 
$\trans{A_i}=A_i$ for each $i\in \{0,1,\ldots ,d\}$,
\item 
$A_i A_j=\sum^{d}_{k=0}p^k_{i,j}A_k$,
for each $i,j\in \{0,1,\ldots ,d\}$.
\end{enumerate}
The vector space
$\mathfrak{A}=\Span _\R \{A_0,A_1,\ldots, A_d\}$
with a basis $\{A_i\}^d_{i=0}$ forms a commutative algebra
and is called the {\it Bose--Mesner algebra} of
$\mathfrak{X}$.
It is well known that $\mathfrak{A}$ is semi-simple,
hence $\mathfrak{A}$ has a second basis
$E_0, E_1,\ldots ,E_d$ satisfying the following conditions:
\begin{enumerate}[{(E}1{)}]
\item 
$E_0=\frac{1}{|X|}J$,
\item 
$E_0+E_1+\cdots +E_d=I$ and $E_i E_j =\delta_{i,j} E_i$,
\item 
$\trans{E_i}=E_i$ for each $i\in \{0,1,\ldots ,d\}$,
\item 
$E_i \circ E_j=\frac{1}{|X|}\sum^{d}_{k=0}q^k_{i,j}E_k$
for some real numbers $q^k_{i,j}$,
for each $i,j\in \{0,1,\ldots ,d\}$.
\end{enumerate}
Then $E_0,E_1,\ldots ,E_d$ are the primitive idempotents
of the Bose--Mesner algebra $\mathfrak{A}$.
The {\it first eigenmatrix} $P=(P_i(j))^d_{j,i=0}$
and the {\it second eigenmatrix} $Q=(Q_i(j))^d_{j,i=0}$
of $\mathfrak{X}$ are defined by 
\[A_i=\sum^{d}_{j=0}P_i(j)E_j\ 
\text{and}\ 
E_i=\frac{1}{|X|}\sum^{d}_{j=0}Q_i(j)A_j,\]
respectively.

We call $\mathfrak{X}$ a \emph{$P$-polynomial scheme}
(or a \emph{metric scheme})
with respect to the ordering $\{A_i \}_{i=0}^d$,
if for each $i \in \{0,1,\ldots, d \}$, 
there exists a polynomial $v_i$ of degree $i$,
such that $A_i=v_i(A_1)$.
Moreover $\mathfrak{X}$ is called
a $P$-polynomial scheme with respect to $A_1$
if it has the $P$-polynomial property
with respect to some ordering
$A_0,A_1, A_{i_2},A_{i_3},\ldots ,A_{i_d}$.
Dually, $\mathfrak{X}$ is called
a \emph{$Q$-polynomial scheme} (or a \emph{cometric scheme}) 
with respect to the ordering $\{E_i \}_{i=0}^d$,
if for each $i \in \{0,1,\ldots, d \}$, 
there exists a polynomial $v_i^{\ast}$ of degree $i$, 
such that $|X| E_i=v^\ast _i((|X| E_1)^\circ)$,
where, for $f\in \R [t]$ and
$M=(M_{x,y})_{x,y\in X}\in M_X(\R)$,
we define $f(M^\circ)=(f(M_{x,y}))_{x,y\in X}$.
Moreover $\mathfrak{X}$ is called
a $Q$-polynomial scheme with respect to $E_1$
if it has the $Q$-polynomial property
with respect to some ordering
$E_0,E_1, E_{i_2},E_{i_3},\ldots ,E_{i_d}$.
In fact, an ordering of a $Q$-polynomial association scheme
with respect to $E_1$ is uniquely determined
(cf.~Kurihara--Nozaki~\cite{Kurihara2012coq}).
It is known that $v_i$ and $v_i^{\ast}$
form systems of orthogonal polynomials~\cite{Bannai1984aci}.

\section{Harmonic polynomials on Spherical $2$-designs
and Predegree polynomials}
\label{sec:pol_sp}
In this section,
we consider the decomposition of the space of functions
on a spherical $2$-design into
the subspaces consisting of harmonic functions of the same degree.
Also we consider the predegree polynomials,
which are determined by the angles among
the elements of the spherical $2$-design.

Let $r \mathbb{S}^{m-1}=\set{(x_1,x_2,\ldots ,x_m)\in \mathbb{R}^m}{x_1^2+x_2^2+\cdots +x_m^2=r^2}$ be the sphere of radius $r$ centered at the origin in $\mathbb{R}^m$, endowed with the standard inner product
$\bm{x}\cdot \bm{y}=\sum^m_{i=1}x_i y_i$
for $\bm{x}=(x_1,x_2,\ldots ,x_m),
\bm{y}=(y_1,y_2,\ldots ,y_m)\in \R^m$.
In particular $1 \mathbb{S}^{m-1}$ is denoted simply by $\mathbb{S}^{m-1}$.
For a nonempty finite set $X$ in $r \mathbb{S}^{m-1}$, we set
$A(X):=\set{\bm{x}\cdot \bm{y}}{\bm{x},\bm{y}\in X,\ \bm{x}\neq \bm{y}}$
and $A'(X):=A(X)\cup \{r^2\}$,
$X$ is called an \emph{$s$-distance set}
if $|A(X)|=s$.

The concept of spherical designs was introduced
by Delsarte--Goethals--Seidel~\cite{Delsarte1977sca},
and we refer also to Bannai--Bannai~\cite{Bannai2009ssd}
and
\cite{Delsarte1977sca} for detail of
spherical designs.
\begin{df}[Spherical design]
Let $\tau$ be a nonnegative integer. 
A finite nonempty subset $X$ of $r \mathbb{S}^{m-1}$
is called a spherical $\tau$-design if
\[
\frac{1}{\mu (r \mathbb{S}^{m-1})} \int_{r \mathbb{S}^{m-1}} f(\bm{x})d\mu (\bm{x})
=
\frac{1}{|X|}\sum_{\bm{x}\in X} f(\bm{x})
\]
holds for all polynomials $f(\bm{x})=f(x_1,x_2,\ldots,x_m)$
of degree at most $\tau$.
Here $\mu $ is the Lebesgue measure on $r\mathbb{S}^{m-1}$.
\end{df}

Let $X$ be a nonempty finite set in $\sqrt{m}\mathbb{S}^{m-1}$
of size $n$.
For $\alpha \in A'(X)$,
the \emph{relation matrix} $A_\alpha$ is
defined to be
the matrix in $M_X(\R)$ with
\[
(A_\alpha)_{\bm{x},\bm{y}}=
\begin{cases}
1 & \text{if $\bm{x}\cdot \bm{y}=\alpha$,}\\
0 & \text{otherwise.}
\end{cases}
\]
The \emph{normalized Gram matrix} $G$ of $X$
is defined by $G:=\frac{1}{n}\sum_{\alpha \in A'(X)} \alpha A_\alpha$.
Since $X$ is in the sphere of radius $\sqrt{m}$,
$X$ satisfies
\begin{enumerate}[{(TD}1{)}]
\item \label{def:poly_sp_3}
$\bm{x}\cdot \bm{x}=m$ for each $\bm{x}\in X$, i.e.,
$G \circ I=\frac{m}{n} I$,
\item \label{def:poly_sp_4}
$\bm{x}\cdot \bm{y}< m$ for each $\bm{x}, \bm{y}\in X$
with $\bm{x}\neq \bm{y}$, i.e.,
$A_m=I$.
\end{enumerate}
Moreover
it is known that $X$ is a spherical $2$-design
if and only if
\begin{enumerate}[{(TD}1{)}]
\setcounter{enumi}{2}
\item \label{def:poly_sp_1}
$\sum_{\bm{y}\in X}\bm{x}\cdot \bm{y}=0$ for each $\bm{x}\in X$, i.e.,
$G J=0$,
\item \label{def:poly_sp_2}
$\frac{1}{n}\sum_{\bm{z}\in X}(\bm{x}\cdot \bm{z})
(\bm{z}\cdot \bm{y})
=\bm{x}\cdot \bm{y}$ for each $\bm{x}, \bm{y}\in X$, i.e.,
$G^2 =G$,
\end{enumerate}
(cf.~\cite{Delsarte1977sca}).
This condition describe that
the center of $X$ is the origin of $\R ^m$
and $X$ is an eutectic star~\cite{Coxeter1973rp,Delsarte1977sca}.

\begin{rem}
The condition (TD1)--(TD4) is an algebraically dual condition of
simple, connected and regular graphs.
In fact, a graph of order $n$ is simple, $k$-regular, connected
if and only if
the adjacency matrix $A$ of the graph satisfies
$A\circ I=0$, $A\circ A=A$,
$A J=k J$ and
$E_0=\frac{1}{n}J$,
where $E_0$ is the
orthogonal projection onto the eigenspace of $A$
with eigenvalue $k$.
This duality is similar to the duality
of the adjacency matrices and
the primitive idempotents of
association schemes.
\end{rem}

For a $Q$-polynomial association scheme
$\mathfrak{X}=(X,\{R_i\}^s_{i=0})$
with respect to $E_1$,
the mapping $x\mapsto (|X|E_1 )_x$,
sending each vertex $x$ to the $x$-th column
of $|X| E_1$,
determines a mapping from $X$ to a finite subset $\tilde{X}$
in $\Span_\R \set{(|X| E_1 )_y}{y\in X}\cong \R^{m}$,
where $m=\rank E_1$.
It is known that
$\tilde{X}$ can be an $s$-distance set in
$\sqrt{m} \mathbb{S}^{m-1}$,
and $\tilde{X}$ is always a spherical 2-design
(cf.~Cameron--Goethals--Seidel~\cite{Cameron1978kcs}).
On the other hand,
not all spherical 2-designs arise in this way.
We are interested in when 
a spherical 2-design
has the structure of a
$Q$-polynomial association scheme.
There exists a result for this ``$Q$-polynomial property''
of spherical designs.
Delsarte--Goethals--Seidel~\cite{Delsarte1977sca}
proved that
a $\tau$-design $X$ which is an $s$-distance set
with $\tau\ge 2s-2$
has the $Q$-polynomial property.
In fact the condition $\tau\ge 2s-2$ seems 
too strong.
In the present paper,
we only focus on the $Q$-polynomial property of
$2$-designs.

%\begin{prop}
%\label{prop:rank}
%Let $X\subset \sqrt{m}\mathbb{S}^{m-1}$
%be a spherical 2-design.
%Then $\rank G=m$.
%\end{prop}
%\begin{proof}
%$G$ is an idempotent and therefore
%the only eigenvalues are 0 or 1.
%Hence, $\rank G=\tr G=m$.
%\end{proof}

For a finite subset $X$
in $\sqrt{m}\mathbb{S}^{m-1}$ of size $n$,
the vector space of $\R$-valued functions on $X$ is denoted by $C(X)$.
We equip $C(X)$ with an inner product by
\[
(f,g)=\frac{1}{n}\sum_{\bm{x}\in X} f(\bm{x}) g(\bm{x})
\]
for $f,g \in C(X)$.
For a polynomial $p\in \R [t]$ and $\bm{a}\in X$,
we define the \emph{zonal polynomial $\zeta _{\bm{a}}(p) :X\rightarrow \R$
of $p$ at $\bm{a}$}
by $\zeta _{\bm{a}}(p) (x)=p(\bm{a}\cdot \bm{x})$.
We further define the vector spaces $\pol _k(X)$,
which first appeared in Godsil~\cite{Godsil1989ps},
recursively by setting:
\begin{itemize}
\item $\pol_0(X)$ is the set of constant functions on $X$,
\item $\pol_1(X)=\Span _\R \set{\zeta _{\bm{a}}(p)}{\bm{a}\in X, \deg p\le 1}$,
\item $\pol_{k}(X)=\Span _\R  \set{f g}
{f\in \pol_1(X),\ g\in \pol_{k-1
}(X)}$ for $k \ge 2$.
\end{itemize}
If $f\in \pol_k(X)\setminus \pol_{k-1}(X)$,
then we say that $f$ has \emph{degree $k$}.
From the definition of $\pol_{k}(X)$,
it seems complicated to describe
elements of $\pol_{k}(X)$,
but
we can simplify the description of $\pol _k (X)$.
\begin{lem}[cf.~Theorem 4.1 in Chapter 15 in Godsil~\cite{Godsil1993ac}]
\label{lem:Pol_span_easy}
For each nonnegative integer $k$,
\[
\pol _k (X)=\Span _\R \set{\zeta _{\bm{a}}(p)}{\bm{a}\in X, \deg p\le k}.
\]
\end{lem}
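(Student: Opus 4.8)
The plan is to prove both inclusions at once by identifying each side with the sum of harmonic layers $\sum_{l=0}^{k}\Harm_l|_X$, where $\Harm_l$ is the space of degree-$l$ spherical harmonics on $\sqrt{m}\mathbb{S}^{m-1}$ and $\Harm_l|_X=\im R_l$ is its image under the restriction map $R_l\colon\Harm_l\to C(X)$, $Y\mapsto(Y(\bm{a}))_{\bm{a}\in X}$. Write $W_k:=\Span_\R\set{\zeta_{\bm a}(p)}{\bm a\in X,\ \deg p\le k}$ for the right-hand side. The inclusion $W_k\subseteq\pol_k(X)$ is the easy half: each $(\bm{a}\cdot\bm{x})^i$ is the product of $i$ copies of the degree-one function $\bm{x}\mapsto\bm{a}\cdot\bm{x}$, which lies in $\pol_1(X)$, so every $\zeta_{\bm a}(p)$ with $\deg p\le k$ lies in $\pol_k(X)$. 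For the left-hand side, note first that because $X$ spans $\R^m$ each coordinate function lies in $\pol_1(X)$, so $\pol_k(X)$ coincides with the space of restrictions to $X$ of all polynomials of degree at most $k$; the classical decomposition of polynomials on a sphere (using that $\bm{x}\cdot\bm{x}=m$ is constant on $X$) then yields $\pol_k(X)=\sum_{l=0}^{k}\Harm_l|_X$.

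Next I would treat $W_k$. Fixing an orthonormal basis $\{Y_{l,1},\dots,Y_{l,h_l}\}$ of $\Harm_l$ for the $L^2$-inner product $\langle\cdot,\cdot\rangle$ on the sphere, the addition formula supplies, for each $l$, a degree-$l$ (Gegenbauer) polynomial $G_l$ with $G_l(\bm{a}\cdot\bm{x})=c_l\sum_{i=1}^{h_l}Y_{l,i}(\bm{a})Y_{l,i}(\bm{x})$ for a nonzero constant $c_l$. Since $\{G_0,\dots,G_k\}$ contains one polynomial of each degree $0,1,\dots,k$, it is a basis of the polynomials of degree at most $k$, so $W_k=\sum_{l=0}^{k}\Span_\R\set{\zeta_{\bm a}(G_l)}{\bm a\in X}$. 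The point is that $\zeta_{\bm a}(G_l)$ is, up to the factor $c_l$, the restriction to $X$ of the reproducing kernel $K^{(l)}_{\bm a}:=\sum_i Y_{l,i}(\bm{a})Y_{l,i}\in\Harm_l$, which satisfies $\langle K^{(l)}_{\bm a},Y\rangle=Y(\bm{a})$ for all $Y\in\Harm_l$.

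The crux is then to show $\Span_\R\set{\zeta_{\bm a}(G_l)}{\bm a\in X}=\Harm_l|_X$ for each $l$; summing over $l\le k$ gives $W_k=\sum_{l=0}^{k}\Harm_l|_X=\pol_k(X)$, finishing the proof. The hard part is exactly this step, and it is where the naive approach breaks down: trying to place an arbitrary product $(\bm{a}_1\cdot\bm{x})\cdots(\bm{a}_j\cdot\bm{x})$ into $W_k$ by polarization fails, because polarization produces powers $\big((\textstyle\sum_{i\in S}\bm{a}_i)\cdot\bm{x}\big)^{j}$ whose base point $\sum_{i\in S}\bm{a}_i$ need not lie in $X$, so the resulting zonal polynomials are not admissible. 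The reproducing-kernel identity resolves this: if $Y\in\Harm_l$ is orthogonal to every $K^{(l)}_{\bm a}$ with $\bm{a}\in X$, then $Y(\bm{a})=\langle K^{(l)}_{\bm a},Y\rangle=0$ for all $\bm{a}\in X$, i.e. $Y\in\ker R_l$; hence $\Span_\R\set{K^{(l)}_{\bm a}}{\bm a\in X}=(\ker R_l)^{\perp}$ is a complement of $\ker R_l$ in $\Harm_l$, and applying $R_l$ gives $R_l\big((\ker R_l)^{\perp}\big)=\im R_l=\Harm_l|_X$. Thus the single-base-point zonal polynomials already recover each entire harmonic layer, which is precisely what the polarization argument could not deliver.
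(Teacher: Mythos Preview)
The paper does not supply its own proof of this lemma; it merely cites Theorem~4.1 in Chapter~15 of Godsil~\cite{Godsil1993ac}. Your argument is correct and is in fact the standard route, essentially the one Godsil takes: identify both sides with $\sum_{l\le k}\Harm_l|_X$ via the classical harmonic decomposition on the sphere, and use the addition formula to see that the zonal functions $\zeta_{\bm a}(G_l)$ are restrictions of the reproducing kernels $K^{(l)}_{\bm a}$, so that their span under $R_l$ recovers all of $\Harm_l|_X$. Your diagnosis of why a naive polarization argument fails is also on point.

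One small remark: at the place in the paper where the lemma is stated, $X$ is only assumed to be a finite subset of $\sqrt{m}\,\mathbb{S}^{m-1}$, not yet a $2$-design, so your assertion ``because $X$ spans $\R^m$'' is not granted by the hypotheses. This is harmless, however: for any $\bm x\in X$ one has $\bm e_i\cdot\bm x=\bm v\cdot\bm x$ where $\bm v$ is the orthogonal projection of $\bm e_i$ onto $\Span(X)$, so each coordinate function restricted to $X$ still lies in $\pol_1(X)$, and the rest of your argument goes through unchanged (equivalently, replace $\R^m$ by $\Span(X)$ throughout). You might also flag that your $\Harm_l$ denotes the ambient spherical harmonics, not the paper's $\Harm_l(X)$; the two notions differ in general, but since you only use $\Harm_l|_X$ as an auxiliary filtration and never claim it coincides with $\Harm_l(X)$, no confusion arises.
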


Next we define the degree of $X$.
\begin{lem}
\label{lem:C(X)eqPol_s}
Suppose a finite set $X \subset \sqrt{m}\mathbb{S}^{m-1}$ is an $s$-distance set.
Then
\[C(X)
=
\pol _s (X).
\]
\end{lem}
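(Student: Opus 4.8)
The plan is to prove the two inclusions separately. The containment $\pol_s(X)\subseteq C(X)$ is immediate, since every zonal polynomial $\zeta_{\bm{a}}(p)$ is by definition an $\R$-valued function on $X$. The substance lies in the reverse inclusion $C(X)\subseteq \pol_s(X)$. Since the indicator functions $\delta_{\bm{b}}$ for $\bm{b}\in X$, defined by $\delta_{\bm{b}}(\bm{x})=1$ if $\bm{x}=\bm{b}$ and $0$ otherwise, form a basis of $C(X)$, it suffices to show that each $\delta_{\bm{b}}$ belongs to $\pol_s(X)$.

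To exhibit $\delta_{\bm{b}}$ as an element of $\pol_s(X)$, I would invoke Lemma~\ref{lem:Pol_span_easy}, which guarantees that $\pol_s(X)$ contains every zonal polynomial $\zeta_{\bm{a}}(p)$ with $\deg p\le s$. For a fixed $\bm{b}\in X$, consider the Lagrange-type polynomial
\[
p(t)=\prod_{\alpha\in A(X)}\frac{t-\alpha}{m-\alpha}\in\R[t].
\]
Because $X$ is an $s$-distance set we have $|A(X)|=s$, so $\deg p=s$; and because $X$ lies in $\sqrt{m}\mathbb{S}^{m-1}$, condition (TD2) guarantees $\alpha<m$ for every $\alpha\in A(X)$, so each denominator $m-\alpha$ is nonzero and $p$ is well defined. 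Evaluating the associated zonal polynomial $\zeta_{\bm{b}}(p)$ at a point $\bm{x}\in X$: if $\bm{x}=\bm{b}$ then $\bm{b}\cdot\bm{x}=m$ by (TD1) and every factor equals $1$, so $\zeta_{\bm{b}}(p)(\bm{b})=p(m)=1$; if $\bm{x}\ne\bm{b}$ then $\bm{b}\cdot\bm{x}\in A(X)$, so one factor of the product vanishes and $\zeta_{\bm{b}}(p)(\bm{x})=0$. Hence $\zeta_{\bm{b}}(p)=\delta_{\bm{b}}$, which places $\delta_{\bm{b}}$ in $\pol_s(X)$.

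The argument is essentially Lagrange interpolation over the finite set $A'(X)$ of attained inner products, and no single step is a serious obstacle. The one point that requires care --- and that pins down why the degree is exactly $s$ rather than larger --- is the bookkeeping of $A'(X)=A(X)\cup\{m\}$: conditions (TD1) and (TD2) are precisely what ensure that $m$ is the diagonal value and is separated from the $s$ off-diagonal values, so that the degree-$s$ annihilating polynomial both exists and has degree matching the index $s$ of $\pol_s(X)$. Running this construction for every $\bm{b}\in X$ shows that $\pol_s(X)$ contains a basis of $C(X)$, which yields $C(X)\subseteq\pol_s(X)$ and hence the desired equality.
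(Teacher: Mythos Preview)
Your proof is correct and follows essentially the same route as the paper: both construct the Lagrange-type polynomial $F(t)=\prod_{\alpha\in A(X)}\frac{t-\alpha}{m-\alpha}$ of degree $s$, use (TD1) and (TD2) to verify that $\zeta_{\bm{b}}(F)$ is the indicator function of $\bm{b}$, and conclude that these indicators span $C(X)$ inside $\pol_s(X)$. The only cosmetic difference is that you explicitly invoke Lemma~\ref{lem:Pol_span_easy} for the containment $\zeta_{\bm{b}}(F)\in\pol_s(X)$, whereas the paper leaves this implicit (only the easy direction of that lemma is needed here, which follows directly from the recursive definition of $\pol_k(X)$).
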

\begin{proof}
Let $F(t)=\prod_{\alpha \in A(X)}
\frac{t-\alpha}{m-\alpha}$.
Then
by (TD\ref{def:poly_sp_3}) and
(TD\ref{def:poly_sp_4}),
for each $\bm{y}\in X$, the function $\zeta _{\bm{y}}(F) (\bm{x})$
is equal to 1 if $\bm{x}=\bm{y}$, and 0 otherwise.
Namely $\{\zeta _{\bm{y}}(F)\}_{\bm{y}\in X}$ is a basis
of $C(X)$.
Since $\deg F=s$, we have $\zeta _{\bm{y}}(F) \in \pol _s(X)$
for all $\bm{y}\in X$.
Thus, $C(X)= \pol _s(X)$, as desired.
\end{proof}
We set $S=\min\set{i\in \{0,1,\ldots ,s\}}{\pol _i(X )=C(X)}$,
and we call $S$ the \emph{degree} of $X$.
By Lemma~\ref{lem:C(X)eqPol_s}, we have
$S\le s$.

For the rest of this paper we always suppose that
$X$ is a spherical 2-design of size $n$ in $\sqrt{m}\mathbb{S}^{m-1}$.
Let $\Harm_0(X)=\pol_0(X)$
and we define
\[
\Harm_k (X)=\pol_k(X)
\cap
\pol_{k-1}(X)^\bot
\ \text{for $k\ge 1$}.\]
Its elements are \emph{harmonic polynomials of degree $k$}.
From the definition of the degree of $X$, we get
\[
\Harm _j(X) \neq \{0\}\ (0\le j\le S)\ 
\text{and}
\ 
C(X)=\bigoplus^{S}_{i=0}\Harm_i (X).
\]
By (TD\ref{def:poly_sp_1}),
for a polynomial $r_1 t+r_2 \in \R [t]$ of degree 1 and $\bm{a}\in X$,
we have
\begin{eqnarray*}
(\pmb{1},\zeta_{\bm{a}} (r_1 t+r_2))
&=&
\frac{1}{n}\sum_{\bm{x}\in X}(r_1 \bm{a}\cdot \bm{x}+r_2)\\
&=&
\frac{r_1}{n}
\sum_{\bm{x}\in X}\bm{a}\cdot \bm{x}+r_2\\
&=&
r_2,
\end{eqnarray*}
where
$\pmb{1} (\bm{x})=1$ for any $\bm{x}\in X$.
Hence
$\Harm_1 (X)=\mathrm{Span}_{\R} \set{\zeta_{\bm{a}} (t)}{\bm{a}\in X}$.

Now $M_X(\R)$ acts on $C(X)$ by
$(M f)(\bm{x})
=\sum_{\bm{y}\in X}M_{\bm{x},\bm{y}} f(\bm{y})$
for $M\in M_X(\R)$ and $f\in C(X)$.
Let $F_i$ be the projection matrix
from $C(X)$ onto $\Harm_i(X)$,
that is,
$F_i f $ is equal to $f$
if $f\in \Harm _i(X)$,
0 if $f\in \bigoplus^S_{j=0,j\neq i}\Harm _j(X)$.
Then $\{F_i\}^{S}_{i=0}$ satisfy
$\sum^{S}_{i=0}F_i=I$, $F_j\neq 0$ and $F_j^2=F_j$
for $j\in \{0,1,\ldots ,S\}$.
We call $\{F_i\}^{S}_{i=0}$
the \emph{projection matrices}
of $X$.

\begin{lem}
$F_0=\frac{1}{n}J$ and $F_1=G$.
\end{lem}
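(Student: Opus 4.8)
I want to show two matrix identities, $F_0=\tfrac1n J$ and $F_1=G$.  Since each $F_i$ is the orthogonal projection of $C(X)$ onto $\Harm_i(X)$, the natural strategy is to identify each target matrix as a projection onto the correct harmonic subspace.  First I would handle $F_0$.  By definition $\Harm_0(X)=\pol_0(X)$ is the space of constant functions, which is the one-dimensional span of $\pmb 1$.  So I need only verify that $\tfrac1n J$ acts on $C(X)$ as the orthogonal projection onto $\R\pmb 1$.  Computing $(\tfrac1n J f)(\bm x)=\tfrac1n\sum_{\bm y\in X} f(\bm y)$, the result is the constant function equal to the mean $(\pmb 1,f)$ of $f$, which is precisely the orthogonal projection onto $\R\pmb 1$ with respect to the inner product $(\,\cdot\,,\cdot\,)$.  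Since a projection is determined by its action on $C(X)$, this gives $F_0=\tfrac1n J$.

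For $F_1=G$, the idea is the same but the subspace is $\Harm_1(X)$.  I would use the identity $\Harm_1(X)=\Span_\R\set{\zeta_{\bm a}(t)}{\bm a\in X}$, established just above via (TD\ref{def:poly_sp_1}); explicitly $\zeta_{\bm a}(t)(\bm x)=\bm a\cdot\bm x$.  Next I would compute how $G$ acts on a function $f\in C(X)$.  From $G=\tfrac1n\sum_{\alpha\in A'(X)}\alpha A_\alpha$ one checks that $G_{\bm x,\bm y}=\tfrac1n(\bm x\cdot\bm y)$ for all $\bm x,\bm y$ (including the diagonal, by (TD\ref{def:poly_sp_3})), so $(Gf)(\bm x)=\tfrac1n\sum_{\bm y\in X}(\bm x\cdot\bm y)f(\bm y)$.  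The plan is then to show $G$ is the orthogonal projection onto $\Harm_1(X)$, which by the uniqueness of orthogonal projections will force $F_1=G$.  I would verify the three defining features: (i) $G$ is symmetric and idempotent, (ii) the image of $G$ lands inside $\Harm_1(X)$, and (iii) $G$ fixes $\Harm_1(X)$ pointwise.

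The main technical engine is the spherical $2$-design property.  Symmetry of $G$ is clear from $\trans{A_\alpha}=A_\alpha$, and idempotency $G^2=G$ is exactly (TD\ref{def:poly_sp_2}), so (i) is immediate.  For (ii), note that for any $f$ the value $(Gf)(\bm x)=\tfrac1n\sum_{\bm y}(\bm x\cdot\bm y)f(\bm y)$ is, as a function of $\bm x$, an $\R$-linear combination of the coordinate functions $x_1,\dots,x_m$, hence a combination of the $\zeta_{\bm a}(t)$; by the description of $\Harm_1(X)$ it lies in $\Harm_1(X)$.  For (iii), I would take a generator $\zeta_{\bm a}(t)$ and compute $(G\,\zeta_{\bm a}(t))(\bm x)=\tfrac1n\sum_{\bm y\in X}(\bm x\cdot\bm y)(\bm a\cdot\bm y)$, which by (TD\ref{def:poly_sp_2}) equals $\bm a\cdot\bm x=\zeta_{\bm a}(t)(\bm x)$; so $G$ is the identity on each generator, hence on all of $\Harm_1(X)$.

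The step I expect to be the genuine crux is (iii), the fact that $G$ fixes $\Harm_1(X)$, because this is where the full force of condition (TD\ref{def:poly_sp_2}) — i.e.\ that $X$ is a spherical $2$-design — is used; the analogous argument would simply fail for an arbitrary finite set on the sphere.  Everything else (the action of $\tfrac1n J$, symmetry and idempotency of $G$, and the inclusion (ii)) is routine bookkeeping once these identities are set up, so I would keep those parts brief and concentrate the computation on verifying that $(G\,\zeta_{\bm a}(t),\,h)=(\zeta_{\bm a}(t),h)$, or equivalently the pointwise identity $G\,\zeta_{\bm a}(t)=\zeta_{\bm a}(t)$, via the design equation.
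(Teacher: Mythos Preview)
Your proposal is correct and follows essentially the same approach as the paper: for $F_0$ you identify $\tfrac1n J$ as the averaging map onto constants, and for $F_1$ you use (TD\ref{def:poly_sp_2}) to verify that $G$ fixes each $\zeta_{\bm a}(t)$, together with the observation that $Gf$ always lands in $\Harm_1(X)$. The paper packages the second half slightly differently---it checks directly that $(Gf)(\bm x)=(\zeta_{\bm x}(t),f)=0$ for $f\in\Harm_1(X)^\perp$ rather than going through symmetry/idempotency and the image---but this is the same computation read in the dual direction, and your version is equally valid.
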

\begin{proof}
Since
$(\frac{1}{n}J\cdot \pmb{1})(\bm{x}) =\frac{1}{n}\sum_{\bm{x}\in X}1
=\pmb{1}(\bm{x})$
for every $\bm{x}\in X$
and every $f \in \Harm_0 (X)^{\bot}$ satisfies
$(\frac{1}{n}J f )(\bm{x})=\frac{1}{n}\sum_{\bm{x}\in X}1\cdot f(\bm{x})
=(\pmb{1},f)=0$,
it follows that $F_0=\frac{1}{n}J$.

By (TD\ref{def:poly_sp_2}),
we have
\begin{eqnarray*}
(G \zeta _{\bm{y}}(t))(\bm{x})
&=&\sum_{\bm{z}\in X} 
\frac{1}{n}(\bm{x}\cdot \bm{z})
(\bm{z}\cdot \bm{y})
\\
&=& \bm{y}\cdot \bm{x}\\
&=& \zeta _{\bm{y}}(t) (\bm{x}).
\end{eqnarray*}
Also,
every $f \in \Harm_1 (X)^{\bot}$
satisfies
$(G f)(\bm{x})=
\sum_{\bm{y}\in X}
\frac{1}{n}(\bm{x}\cdot \bm{y}) f(\bm{y})
=
(\zeta_{\bm{x}} (t), f)
=0$.
Therefore, it follows that
$G=F_1$.
\end{proof}

\begin{lem}
\label{lem:F_j G^i=0}
For $j>i$,
$F_j G^{\circ i}=0$.
\end{lem}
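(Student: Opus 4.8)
The plan is to read off the columns of $G^{\circ i}$ as zonal polynomials of bounded degree and then exploit the orthogonal decomposition $C(X)=\bigoplus_{k=0}^S\Harm_k(X)$. First I would record that, since $G_{\bm{x},\bm{y}}=\frac{1}{n}(\bm{x}\cdot\bm{y})$, the $(\bm{x},\bm{y})$-entry of the Hadamard power $G^{\circ i}$ equals $\frac{1}{n^i}(\bm{x}\cdot\bm{y})^i$. Hence the $\bm{y}$-th column of $G^{\circ i}$, viewed as a function in $C(X)$, is exactly $\frac{1}{n^i}\zeta_{\bm{y}}(t^i)$, the zonal polynomial of $t^i$ at $\bm{y}$.

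Next I would locate these columns inside the filtration $\pol_0(X)\subseteq\pol_1(X)\subseteq\cdots$. Since $\deg t^i=i$, Lemma~\ref{lem:Pol_span_easy} gives $\zeta_{\bm{y}}(t^i)\in\pol_i(X)$ for every $\bm{y}\in X$. The definition $\Harm_k(X)=\pol_k(X)\cap\pol_{k-1}(X)^\bot$ yields the orthogonal splitting $\pol_k(X)=\pol_{k-1}(X)\oplus\Harm_k(X)$, and by induction $\pol_i(X)=\bigoplus_{k=0}^i\Harm_k(X)$. In particular every column of $G^{\circ i}$ lies in $\bigoplus_{k=0}^i\Harm_k(X)$.

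Finally I would invoke orthogonality. For $j>i$ the subspace $\Harm_j(X)$ is orthogonal to each $\Harm_k(X)$ with $k\le i$, hence orthogonal to all of $\pol_i(X)$, so the projection $F_j$ annihilates every element of $\pol_i(X)$. Since the $\bm{y}$-th column of $F_j G^{\circ i}$ is $F_j$ applied to the $\bm{y}$-th column of $G^{\circ i}$, and that column lies in $\pol_i(X)$, every column of $F_j G^{\circ i}$ vanishes, giving $F_j G^{\circ i}=0$.

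The argument is essentially bookkeeping of degrees, and the only point requiring care is the identification $\pol_i(X)=\bigoplus_{k=0}^i\Harm_k(X)$, which rests on the nested structure of the $\pol_k(X)$ together with the orthogonal-complement definition of $\Harm_k(X)$; once that filtration is in place the conclusion is immediate. I expect no genuine difficulty beyond verifying this decomposition carefully.
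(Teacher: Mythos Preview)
Your argument is correct and is essentially the same as the paper's: identify each column of $G^{\circ i}$ with (a scalar multiple of) the zonal polynomial $\zeta_{\bm{y}}(t^i)\in\pol_i(X)=\bigoplus_{k=0}^i\Harm_k(X)$, and then observe that $F_j$ kills $\pol_i(X)$ for $j>i$. You simply spell out the decomposition $\pol_i(X)=\bigoplus_{k\le i}\Harm_k(X)$ more carefully than the paper does.
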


\begin{proof}
The $\bm{y}$-th column of $G^{\circ i}$
can be regarded
as the zonal polynomial $\zeta_{\bm{y}}(t^i)$ of $t^i$ at $\bm{y}$.
Then $\zeta_{\bm{y}}(t^i)\in \pol _i(X)=\bigoplus^i_{k=0}\Harm _k(X)$.
Hence we get $F_j G^{\circ i}=0$.
\end{proof}

Suppose $X$ is an $s$-distance set
with the normalized Gram matrix $G$.
For $\alpha \in A'(X)$,
denote $|\set{(\bm{x},\bm{y})\in X^2}
{\bm{x}\cdot \bm{y}=\alpha}|$ by $\kappa_\alpha$.
We put $Z^\ast(t)=\prod_{\alpha \in A'(X)}(t-\alpha)$.
We define an inner product on $\mathbb{R}[t]/(Z^\ast)$ by,
for $p,q\in \mathbb{R}[t]/(Z^\ast)$,
\begin{equation}
\label{eq:poly_inn}
\aprod{p}{q}=
\frac{1}{n^2}\sum_{\alpha \in A'(X)}\kappa_\alpha
p(\alpha)q(\alpha).
\end{equation}
The {\it predegree polynomials}
$q_0,q_1,\ldots ,q_s$
of $X$ are the polynomials satisfying
$\deg q_k =k$ and $\aprod{q_k}{q_h}=\delta _{k,h} q_k(m)$
for any $k,h\in \{0,1,\ldots ,s\}$.
As a sequence of orthogonal polynomials, the predegree polynomials
satisfy a three-term recurrence of the form
\begin{equation}
\label{eq:q_i recurrence}
t q_k =b^\ast_{k-1} q_{k-1}+a^\ast_{k} q_{k}+c^\ast_{k+1} q_{k+1}
\quad (0\le k\le s),
\end{equation}
where the constants $b^\ast_{k-1}$, $a^\ast_{k}$ and $c^\ast_{k+1}$
are the Fourier coefficients of $t q_k$ in terms of
$\{q_i\}^s_{i=0}$
respectively (and $b^\ast_{-1}=c^\ast_{s+1}=0$).
Moreover, the predegree polynomials satisfy
the following equation,
cf.~\cite{Fiol1997fla}:
\[
\sum^{s}_{i=0}q_i (t)=n \prod_{\alpha \in A(X)}
\frac{t-\alpha}{m-\alpha}.
\]
To see this,
just note that
$\aprod{n \prod_{\alpha \in A(X)}
\frac{t-\alpha}{m-\alpha}}{q_k}=\frac{1}{n}\kappa _m q_k(m)=q_k(m)$.
We put $H(t)=\sum^{s}_{i=0}q_i (t)$.
Then $\frac{1}{n} H((n G)^\circ)= I$.

\begin{lem}
\label{lem:Q-poly->dis}
Let $\mathfrak{X}=(X,\{R_i\}^{s}_{i=0})$ be a $Q$-polynomial scheme,
with $|X|=n$, with respect to $E_1$.
Then the degree $S$ of the image of
the spherical embedding of $\mathfrak{X}$
with respect to $E_1$
is $s$.
Moreover,
for each $i\in \{0,1,\ldots ,s\}$,
$F_i=
\frac{1}{n}q_i ((n G)^\circ)$.
\end{lem}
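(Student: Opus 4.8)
The plan is to show that, under the spherical embedding, the normalized Gram matrix $G$ of $\tilde{X}$ coincides with the primitive idempotent $E_1$, and then to transport the $Q$-polynomial structure of $\mathfrak{X}$ directly onto the harmonic decomposition of $C(X)$. First I would record the basic identification. Normalizing the embedding so that $\tilde{X}\subset \sqrt{m}\mathbb{S}^{m-1}$ with $m=\rank E_1$, the inner products of the images are $\tilde{\bm{x}}\cdot \tilde{\bm{y}}=n(E_1)_{\bm{x},\bm{y}}$. Since $E_1$ lies in the Bose--Mesner algebra it is constant on each relation, and on the diagonal $R_0$ its value is $\frac{1}{n}Q_1(0)=\frac{m}{n}$; hence $\tilde{\bm{x}}\cdot\tilde{\bm{x}}=m$ and $G=E_1$, that is, $nG=|X|E_1$. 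In particular $(nG)^{\circ i}=(|X|E_1)^{\circ i}$, and the $\bm{a}$-th column of $(nG)^{\circ i}$ is exactly the zonal polynomial $\zeta_{\bm{a}}(t^i)$. Here $\tilde{X}$ is a spherical $2$-design and an $s$-distance set by the cited embedding results, so the predegree polynomials $q_0,\ldots,q_s$ are defined.

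Next I would pin down the degree and the projections. By Lemma~\ref{lem:Pol_span_easy}, $\pol_k(X)$ is spanned by the columns of the matrices $(nG)^{\circ i}$, $0\le i\le k$. The $Q$-polynomial property $|X|E_j=v_j^\ast((|X|E_1)^\circ)$ with $\deg v_j^\ast=j$ is triangular and invertible, so $\Span_\R\{(nG)^{\circ i}:0\le i\le k\}=\Span_\R\{E_0,\ldots,E_k\}$. Passing to column spaces gives $\pol_k(X)=\bigoplus_{j=0}^k \im E_j$, an \emph{orthogonal} direct sum since the $E_j$ are mutually orthogonal symmetric idempotents. As every $\im E_j$ is nonzero, this chain grows strictly until $\pol_s(X)=C(X)$ while $\pol_{s-1}(X)\neq C(X)$, whence $S=s$. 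Intersecting with $\pol_{k-1}(X)^\bot$ then yields $\Harm_k(X)=\im E_k$, so that $F_k=E_k$ for every $k$.

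It remains to identify the polynomials, i.e.\ to prove $q_i=v_i^\ast$, and for this I would check that $v_i^\ast$ satisfies the two conditions characterizing the predegree polynomials. The degree condition is built into the $Q$-polynomial property. For orthogonality, each relation $R_j$ contributes the single inner-product value $\alpha_j=Q_1(j)$ with $\kappa_{\alpha_j}=n k_j$, and $v_k^\ast(\alpha_j)=Q_k(j)$, so by \eqref{eq:poly_inn} one has $\aprod{v_k^\ast}{v_h^\ast}=\frac{1}{n}\sum_j k_j Q_k(j)Q_h(j)$. Evaluating $\tr(E_kE_h)$ in two ways, directly as $\delta_{k,h}m_k$ and via $E_i=\frac{1}{n}\sum_j Q_i(j)A_j$ together with $\tr(A_jA_{j'})=nk_j\delta_{j,j'}$, produces the second-eigenmatrix orthogonality relation
\[
\sum_{j=0}^{s} k_j Q_k(j)Q_h(j)=n\,m_k\,\delta_{k,h},
\]
so that $\aprod{v_k^\ast}{v_h^\ast}=m_k\delta_{k,h}$. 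Since $v_k^\ast(m)=v_k^\ast(Q_1(0))=Q_k(0)=m_k$, this reads $\aprod{v_k^\ast}{v_h^\ast}=\delta_{k,h}v_k^\ast(m)$. By uniqueness of the predegree polynomials $q_k=v_k^\ast$, and therefore $F_i=E_i=\frac{1}{n}v_i^\ast((nG)^\circ)=\frac{1}{n}q_i((nG)^\circ)$, as claimed.

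I expect the main obstacle to be this last identification $q_i=v_i^\ast$. The degree count and $F_i=E_i$ follow cleanly from triangularity and orthogonality of the idempotents, but matching the orthogonal families requires the second-eigenmatrix orthogonality relation and careful bookkeeping of the normalizations ($\kappa_{\alpha_j}=nk_j$, the factor $1/n^2$ in \eqref{eq:poly_inn}, and $v_k^\ast(m)=m_k$) so that the computed inner product equals the prescribed normalization $\aprod{q_k}{q_k}=q_k(m)$ \emph{exactly}, not merely up to a scalar.
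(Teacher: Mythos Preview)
Your proposal is correct and follows essentially the same route as the paper: identify $G=E_1$, use Lemma~\ref{lem:Pol_span_easy} and the triangular relation $nE_j=v_j^\ast((nE_1)^\circ)$ to get $\pol_k(X)=\bigoplus_{j\le k}\im E_j$, conclude $S=s$ and $F_i=E_i$, and then verify that $\{v_i^\ast\}$ satisfy the defining orthogonality of the predegree polynomials. The only cosmetic difference is that the paper checks $\aprod{v_i^\ast}{v_j^\ast}=\delta_{i,j}v_i^\ast(m)$ in one line via $\frac{1}{n^2}\sum_{a,b}v_i^\ast(a\cdot b)v_j^\ast(a\cdot b)=\sum_{a,b}(E_i)_{a,b}(E_j)_{a,b}=\tr(E_iE_j)$, whereas you reach the same trace identity by first expanding through the second eigenmatrix to obtain $\sum_j k_jQ_k(j)Q_h(j)=n\,m_k\delta_{k,h}$; these are the same computation packaged differently.
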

\begin{proof}
Let $\{E_i\}^s_{i=0}$ be the $Q$-polynomial ordering
of $\mathfrak{X}$,
and let $\{v^\ast_i\}^s_{i=0}$
be the polynomials
such that $E_i=\frac{1}{n}v^\ast _i((n E_1)^\circ)$
for each $i\in \{0,1,\ldots ,s\}$.
By Lemma~\ref{lem:Pol_span_easy},
we obtain
$\pol _i (X)=
\Span _\R \set{\zeta _a (p)}{a\in X,\ \deg p\le i}=
\sum^i_{j=0}(E_1)^{\circ j} C(X)$.
Moreover,
we obtain
$\sum^i_{j=0}(E_1)^{\circ j} C(X)=E_0 C(X)\perp E_1 C(X)\perp
\cdots \perp E_iC(X)$.
Hence $\Harm _i(X)=E_i C(X)$ and
$F_i=E_i$ for each $i\in \{0,1,\ldots ,s\}$,
and thus $S=s$.
Finally we prove
$q_i=v^\ast_i$
for $i\in \{0,1,\ldots ,s\}$.
For $i,j\in \{0,1,\ldots ,s\}$,
we get
\begin{eqnarray*}
\aprod{v^\ast_i}{v^\ast_j}
&=&
\frac{1}{n^2}\sum_{a,b\in X}
v^\ast_i(a\cdot b)
v^\ast_j(a\cdot b)\\
&=&
\sum_{a,b\in X}
(E_i)_{a,b}
(E_j)_{a,b}\\
&=&
\delta_{i,j}\tr E_i\\
&=&
\delta_{i,j}v^\ast_i(m),
\end{eqnarray*}
that is $\{v^\ast _i\}^s_{i=0}$ coincide with the predegree
polynomials $\{q_i\}^s_{i=0}$ of $X$.
Therefore we get the desired result.
\end{proof}

\begin{lem}
\label{lem:dis->Q-poly}
Suppose a spherical $2$-design $X$ is an $s$-distance set
with $S=s$.
If $\frac{1}{n}q_i((n G)^\circ)=F_i$
for $i\in \{0,1,\ldots ,s\}$,
then $X$ carries
a $Q$-polynomial association scheme.
\end{lem}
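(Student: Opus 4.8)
The plan is to build the association scheme directly from the relation matrices $\{A_\alpha\}_{\alpha\in A'(X)}$ and then to recognize the projection matrices $\{F_i\}$ as its primitive idempotents. The entire argument rests on one spanning identity, so I would establish that first.

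The first step is to show that the algebra $\mathcal{A}:=\Span_\R\{A_\alpha : \alpha\in A'(X)\}$ coincides with $\Span_\R\{F_0,F_1,\ldots,F_s\}$. Since the entries of $(nG)^\circ$ are exactly the inner products $\bm{x}\cdot\bm{y}\in A'(X)$, the hypothesis reads $nF_i=q_i((nG)^\circ)=\sum_{\alpha\in A'(X)}q_i(\alpha)A_\alpha$, so every $F_i$ lies in $\mathcal{A}$. The $F_i$ are $s+1$ nonzero mutually orthogonal idempotents, hence linearly independent, while the $A_\alpha$ are $s+1$ matrices with pairwise disjoint supports, hence also linearly independent. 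A dimension count then forces $\Span_\R\{F_i\}=\mathcal{A}$, both spaces having dimension $s+1$; here the hypothesis $S=s$ is precisely what makes the two index sets have the same cardinality.

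The payoff of this identity is that $\mathcal{A}$ is now simultaneously closed under two products. From the $A_\alpha$ description, $A_\alpha\circ A_\beta=\delta_{\alpha,\beta}A_\alpha$ and $\sum_\alpha A_\alpha=J$, so $\mathcal{A}$ is closed under the Hadamard product and contains $J$; from the $F_i$ description, $F_iF_j=\delta_{i,j}F_i$ and $\sum_i F_i=I$, so $\mathcal{A}$ is a commutative algebra under ordinary matrix multiplication containing $I$. I would then verify the axioms (A1)--(A4) for $\{A_\alpha\}$: $A_m=I$ by (TD\ref{def:poly_sp_4}) gives (A1); the partition property gives (A2); symmetry of the inner product gives (A3); and for (A4), since $A_\alpha A_\beta\in\mathcal{A}=\Span_\R\{A_\gamma\}$, one has $A_\alpha A_\beta=\sum_\gamma p^\gamma_{\alpha,\beta}A_\gamma$, and because each $A_\gamma$ equals $1$ exactly on $R_\gamma$, the coefficient $p^\gamma_{\alpha,\beta}$ is the $(\bm{x},\bm{y})$-entry of $A_\alpha A_\beta$ for any $(\bm{x},\bm{y})\in R_\gamma$, a nonnegative integer independent of the chosen pair. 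Thus $\{A_\alpha\}$ are the adjacency matrices of a symmetric association scheme with Bose--Mesner algebra $\mathcal{A}$.

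Finally I would read off the $Q$-polynomial property. As a complete system of $s+1=\dim\mathcal{A}$ nonzero orthogonal idempotents with $F_0=\frac{1}{n}J$, the $\{F_i\}$ are forced to be the primitive idempotents of $\mathcal{A}$, so I set $E_i:=F_i$ and in particular $E_1=G$. The hypothesis then becomes $nE_i=q_i((nE_1)^\circ)$ with $\deg q_i=i$, which is exactly the defining condition of the $Q$-polynomial property with respect to $E_1$, taking $v^\ast_i=q_i$. Hence $X$ carries a $Q$-polynomial association scheme. I expect the main obstacle to be conceptual rather than computational, namely the observation that closure of $\mathcal{A}$ under ordinary multiplication (inherited from the $F_i$) automatically produces well-defined intersection numbers in (A4); once the spanning identity is in place, everything else is a direct translation into the definitions of Section~\ref{sec:AS}.
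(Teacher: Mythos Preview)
Your proposal is correct and follows essentially the same route as the paper: show $F_i\in\mathfrak{A}:=\Span_\R\{A_\alpha\}$ via the hypothesis, use a dimension count to get $\mathfrak{A}=\Span_\R\{F_i\}$, deduce closure under ordinary multiplication from the $F_i$ being orthogonal idempotents, and then read off the $Q$-polynomial property from $F_i=\frac{1}{n}q_i((nF_1)^\circ)$. Your write-up is somewhat more explicit (you spell out linear independence on both sides and check (A1)--(A4) by hand), but the underlying argument is identical.
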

\begin{proof}
Since $n G=\sum_{\alpha\in A'(X)} \alpha A_\alpha$,
we get
\[
F_i=\frac{1}{n}q_i((n G)^\circ)
=\frac{1}{n}
\sum_{\alpha\in A'(X)} q_i(\alpha) A_\alpha
\in \mathfrak{A}:=\Span _\R \set{A_\alpha}{\alpha \in A'(X)}.
\]
Comparing dimensions,
we find $\mathfrak{A}=\Span _\R \{F_i\}^s_{i=0}$.
Since $F_0,F_1,\ldots ,F_s$ are orthogonal projections,
it follows that
$\mathfrak{A}$ is closed under the multiplication.
Hence we proved that $(X, \{R_\alpha\}_{\alpha \in A'(X)})$
is an association scheme,
where $R_\alpha$ is the relation whose adjacency matrix is $A_\alpha$. 

Now
$\{F_i\}^s_{i=0}$ are the primitive idempotents of $\mathfrak{A}$.
Moreover
$(X,\{R_\alpha\}_{\alpha \in A'(X)})$
is a $Q$-polynomial association scheme
because of
$F_i=\frac{1}{n}q_i((n G)^\circ)=\frac{1}{n}q_i((n F_1)^\circ)$.
\end{proof}
From Lemmas \ref{lem:Q-poly->dis}
and \ref{lem:dis->Q-poly},
we proved that
a spherical 2-design which is an $s$-distance set and $S=s$
has the structure of a $Q$-polynomial scheme
with respect to the idempotents
$\{F_i\}^s_{i=0}$
if and only if
$\frac{1}{n}q_i((n G)^\circ)=F_i$ for $i\in \{0,1,\ldots ,s\}$.

\section{An excess theorem for spherical 2-designs}
\label{sec:excess}
Recall that the spectral excess theorem
implies that,
for a graph $\Gamma=(X,E)$,
the mean value of excesses $\{|\Gamma _d(x)|\}_{x\in X}$
is bounded above by using the predistance polynomial $p_d$
of degree $d$
and equality holds if and only if
$\Gamma$ is distance-regular.
In this section
we give an excess theorem for a spherical
2-design $X$ which is an $s$-distance set and $S=s$.
For $\bm{x}\in X$, put $m_s(\bm{x})=n(F_s)_{\bm{x},\bm{x}}$,
and $m_s(\bm{x})$ is called the \emph{excess} of $\bm{x}$
in terms of $X$.
The following theorem is the main theorem in this paper.

\begin{thm}
\label{thm:excess theorem main}
Suppose a spherical
$2$-design $X$ is an $s$-distance set
with $S=s$.
Then the inequality
\[
\tr F_s=
\frac{1}{n}\sum_{\bm{x}\in X}m_s(\bm{x})\le q_s(m)
\]
holds
and equality is attained if and only if
$X$ has the structure of a $Q$-polynomial association scheme
with respect to the idempotents $\{F_i\}^{s}_{i=0}$.
\end{thm}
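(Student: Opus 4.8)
The plan is to set the two families of idempotents against each other inside the matrix inner product $\mprod{M}{N}:=\frac{1}{n}\tr(M\trans N)$ on $M_X(\R)$. Writing $\widetilde F_i:=\frac{1}{n}q_i((nG)^\circ)$ for the candidate idempotents, I would first record the elementary compatibilities that make the comparison work. Since $(nG)_{\bm x,\bm y}=\bm x\cdot\bm y$, an entrywise computation identifies the matrix inner product with the polynomial one, giving $\mprod{\widetilde F_i}{\widetilde F_j}=\frac{1}{n}\aprod{q_i}{q_j}=\frac{1}{n}\delta_{i,j}q_i(m)$ and $\tr\widetilde F_i=q_i(m)$ (all diagonal entries of $\widetilde F_i$ equal $q_i(m)$). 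For the genuine projections one has $\mprod{F_i}{F_j}=\frac{1}{n}\delta_{i,j}\tr F_i$, and both families resolve the identity: $\sum_i F_i=I=\sum_i\widetilde F_i$, the latter because $\frac{1}{n}H((nG)^\circ)=I$.

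For the inequality itself I would expand $\tr F_s=\tr\bigl(F_s\sum_j\widetilde F_j\bigr)=\sum_j\tr(F_s\widetilde F_j)$. By Lemma~\ref{lem:F_j G^i=0} the image of $\widetilde F_j$ lies in $\pol_j$, so $F_s\widetilde F_j=0$ for $j<s$ and only the top term survives: $\tr F_s=\tr(F_s\widetilde F_s)=n\,\mprod{F_s}{\widetilde F_s}$. Cauchy--Schwarz then gives $\tr F_s=n\,\mprod{F_s}{\widetilde F_s}\le n\,\|F_s\|_{\mathrm m}\|\widetilde F_s\|_{\mathrm m}=\sqrt{\tr F_s}\,\sqrt{q_s(m)}$, and since $\tr F_s=\dim\Harm_s(X)>0$ this yields $\tr F_s\le q_s(m)$. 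Equality in Cauchy--Schwarz means $F_s=\lambda\widetilde F_s$ for some scalar $\lambda$; taking traces and using $\tr F_s=q_s(m)=\tr\widetilde F_s$ forces $\lambda=1$, so equality holds exactly when $F_s=\widetilde F_s$. The reverse implication of the theorem is then immediate from Lemma~\ref{lem:Q-poly->dis}, which already delivers $F_i=\widetilde F_i$ for a $Q$-polynomial scheme.

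The main work — and the step I expect to be the real obstacle — is to promote the single identity $F_s=\widetilde F_s$ to $F_i=\widetilde F_i$ for every $i$, after which Lemma~\ref{lem:dis->Q-poly} supplies the $Q$-polynomial association scheme. I would do this by downward induction on $i$, driven by a tridiagonality property valid for \emph{any} spherical $2$-design: the $\bm y$-th column of $nG\circ F_{i+1}$ is $\zeta_{\bm y}(t)\cdot(F_{i+1})_{\cdot,\bm y}$, a product of a degree-$1$ zonal polynomial with an element of $\Harm_{i+1}$, and because multiplication by a function is self-adjoint for $(\,,\,)$ while $\Harm_k\perp\pol_{k-1}$, one has $\pol_1\cdot\Harm_{i+1}\subseteq\Harm_i\oplus\Harm_{i+1}\oplus\Harm_{i+2}$; hence $F_j(nG\circ F_{i+1})=0$ for $j\le i-1$. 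Assuming $\widetilde F_j=F_j$ for all $j>i$, the three-term recurrence~(\ref{eq:q_i recurrence}) rewrites as $b^\ast_i\widetilde F_i=nG\circ F_{i+1}-a^\ast_{i+1}F_{i+1}-c^\ast_{i+2}F_{i+2}$; left-multiplying by $F_j$ with $j\le i-1$ and invoking tridiagonality annihilates the right-hand side, so $F_j\widetilde F_i=0$ there. Combined with $\im\widetilde F_i\subseteq\pol_i$ (Lemma~\ref{lem:F_j G^i=0}), which gives $F_j\widetilde F_i=0$ for $j>i$, this forces $\im\widetilde F_i\subseteq\Harm_i$, that is $F_i\widetilde F_i=\widetilde F_i$.

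To close the inductive step I would feed in the two resolutions of the identity: from $\sum_j F_j=I=\sum_j\widetilde F_j$ and the inductive hypothesis one gets $\sum_{j\le i}\widetilde F_j=\sum_{j\le i}F_j$, and left-multiplying by $F_i$ — noting $F_i\widetilde F_j=0$ for $j<i$ since $\im\widetilde F_j\subseteq\pol_{i-1}\perp\Harm_i$ — yields $F_i\widetilde F_i=F_i$. Then $\widetilde F_i=F_i\widetilde F_i=F_i$ completes the induction, hence the theorem. I expect the only delicate bookkeeping to be checking $b^\ast_i\neq0$ (so that the recurrence may be solved for $\widetilde F_i$) and keeping the orthogonality relations $F_j\widetilde F_i=0$ in their correct ranges of $j$; the conceptual crux is the tridiagonality property, which is precisely what lets the predegree recurrence interact cleanly with the harmonic grading of $C(X)$.
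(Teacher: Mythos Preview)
Your proof is correct and follows essentially the same route as the paper: your Cauchy--Schwarz step is exactly the paper's orthogonal projection of $F_s$ onto $\mathfrak{A}^\ast$ (both rest on $F_s\,\widetilde F_j=0$ for $j<s$ via Lemma~\ref{lem:F_j G^i=0} and $\sum_j\widetilde F_j=I$), and your downward induction using the three-term recurrence together with the tridiagonality $\pol_1\cdot\Harm_{i+1}\subseteq\Harm_i\oplus\Harm_{i+1}\oplus\Harm_{i+2}$ is precisely the content of the paper's Lemma~\ref{lem:F_s<->Q-poly}. The only cosmetic differences are your $1/n$-normalized inner product and your column-based phrasing of the tridiagonality, where the paper argues pointwise via $((nG)\circ F_{i+1}\,\psi)(\bm x)=(F_{i+1}\cdot\zeta_{\bm x}(t)\psi)(\bm x)$.
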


In order to prove Theorem~\ref{thm:excess theorem main},
we give some notation and a lemma.
The set $\mathfrak{A}^\ast =\mathfrak{A}^\ast (X)=
\set{\frac{1}{n}p((n G)^\circ)}{p\in \mathbb{R}[t]}$
is a vector space of dimension $s+1$
and $\{J, G, \ldots ,G^{\circ s}\}$ is a basis of $\mathfrak{A}^\ast$.
Let $\mathfrak{D}^\ast=\mathfrak{D}^\ast (X)$ be the linear span
of the set $\{F_i\}^s_{i=0}$.
In our context, we will work with the vector space
$\mathfrak{T}^\ast =\mathfrak{A}^\ast +\mathfrak{D}^\ast$.
Note that $J$, $G$ and $I$ are matrices in
$\mathfrak{A}^\ast \cap\mathfrak{D}^\ast$
since $I=\frac{1}{n} H((n G)^\circ)\in \mathfrak{A}^\ast$.
Moreover, we have:
\begin{equation}
\label{eq:F q_i(G) I}
F_0+F_1+\cdots +F_s=I =
\frac{1}{n}(q_0((n G)^\circ)+q_1((n G)^\circ)+\cdots
+q_s((n G)^\circ)).
\end{equation}

\begin{lem}
\label{lem:F_s<->Q-poly}
Suppose a spherical
$2$-design $X$ is an $s$-distance set
with projection matrices $\{F_i\}^s_{i=0}$.
Then $X$ has the structure of a $Q$-polynomial association scheme
with respect to the idempotents $\{F_i\}^{s}_{i=0}$
if and only if $F_s =\frac{1}{n}q_s((n G)^\circ)$.
\end{lem}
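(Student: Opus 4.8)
The plan is to reduce the assertion, by the characterization obtained from Lemmas~\ref{lem:Q-poly->dis} and~\ref{lem:dis->Q-poly} (which applies since the indexing $\{F_i\}^s_{i=0}$ forces $S=s$), to the single implication
\[
F_s=\tfrac{1}{n}q_s((nG)^\circ)\ \Longrightarrow\ F_i=\tfrac{1}{n}q_i((nG)^\circ)\quad(0\le i\le s).
\]
Write $\hat{q}_i:=\tfrac{1}{n}q_i((nG)^\circ)\in\mathfrak{A}^{\ast}$. The forward direction is immediate: if $X$ carries a $Q$-polynomial scheme with idempotents $\{F_i\}^s_{i=0}$, then that same characterization gives $F_i=\hat{q}_i$ for every $i$, in particular $F_s=\hat{q}_s$. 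For the converse, note first that, since $\hat{q}_k$ is a linear combination of $J,G,\dots,G^{\circ k}$, Lemma~\ref{lem:F_j G^i=0} yields $F_j\hat{q}_k=\hat{q}_kF_j=0$ whenever $j>k$ (the second equality by symmetry of $F_j$ and $\hat{q}_k$). Moreover, under an induction hypothesis $\hat{q}_j=F_j$ for $j>k$, relation~\eqref{eq:F q_i(G) I} gives $\sum_{i=0}^k\hat{q}_i=I-\sum_{j>k}F_j=F_0+\cdots+F_k=:U_k$, the orthogonal projection onto $\pol_k(X)$.

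The structural input I would isolate is that multiplication by a degree-one harmonic is \emph{tridiagonal} for the grading $C(X)=\bigoplus_i\Harm_i(X)$: for $f\in\Harm_1(X)$,
\[
f\cdot\Harm_k(X)\subseteq\Harm_{k-1}(X)\oplus\Harm_k(X)\oplus\Harm_{k+1}(X).
\]
The upper containment is clear from $\pol_1(X)\pol_k(X)\subseteq\pol_{k+1}(X)$; the lower one follows because pointwise multiplication by $f$ is self-adjoint for $(\cdot,\cdot)$, so for $g\in\Harm_j(X)$ with $j\le k-2$ and $h\in\Harm_k(X)$ one has $(fh,g)=(h,fg)$ with $fg\in\pol_{j+1}(X)\subseteq\pol_{k-1}(X)\perp h$. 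Applied columnwise---the $\bm{y}$-th column of $nG\circ F_k$ is the pointwise product of $\zeta_{\bm{y}}(t)\in\Harm_1(X)$ with the $\bm{y}$-th column of $F_k$, which lies in $\Harm_k(X)$---this shows that $nG\circ F_k$ is tridiagonal in the projections: $F_j(nG\circ F_k)=(nG\circ F_k)F_j=0$ whenever $|j-k|\ge 2$.

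With these facts I would argue by downward induction on $k$, the base case $k=s$ being the hypothesis. Assume $\hat{q}_j=F_j$ for $j>k$. Transporting the three-term recurrence~\eqref{eq:q_i recurrence} for $q_{k+1}$ through $p\mapsto\tfrac1n p((nG)^\circ)$---under which multiplication by $t$ becomes Hadamard multiplication by $nG$---gives
\[
nG\circ F_{k+1}=b^\ast_k\hat{q}_k+a^\ast_{k+1}F_{k+1}+c^\ast_{k+2}F_{k+2}.
\]
Multiplying on the left by $F_j$ with $j\le k-1$ and invoking the tridiagonality of $nG\circ F_{k+1}$ together with $F_jF_{k+1}=F_jF_{k+2}=0$ gives $b^\ast_k F_j\hat{q}_k=0$; since $b^\ast_k\neq0$ for orthogonal polynomials, $F_j\hat{q}_k=0$ for all $j\le k-1$. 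Combined with $F_j\hat{q}_k=\hat{q}_kF_j=0$ for $j>k$, this forces $\hat{q}_k=F_k\hat{q}_kF_k$. Finally, compressing $\sum_{i=0}^k\hat{q}_i=U_k$ by $F_k$ and using $F_k\hat{q}_iF_k=0$ for $i<k$ (as $\hat{q}_i=U_i\hat{q}_iU_i$ and $F_kU_i=0$) yields $F_k\hat{q}_kF_k=F_kU_kF_k=F_k$, whence $\hat{q}_k=F_k$. The induction done, all $F_i=\hat{q}_i$, and Lemma~\ref{lem:dis->Q-poly} produces the $Q$-polynomial scheme.

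The crux---and the only place the $2$-design hypotheses do genuine work---is the lower half of the tridiagonality, namely that $nG\circ F_{k+1}$ carries no $F_j$-component for $j\le k-1$; this is what propagates the identity downward by annihilating the low-degree harmonic part of $\hat{q}_k$. Without it the descent fails, as indeed it must, since not every spherical $2$-design is $Q$-polynomial. Everything else is routine bookkeeping with the idempotent relations $F_iF_j=\delta_{i,j}F_i$ and the closure of $\mathfrak{A}^{\ast}=\Span_\R\set{A_\alpha}{\alpha\in A'(X)}$ under the Hadamard product.
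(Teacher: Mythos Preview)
Your proof is correct and follows essentially the same downward-induction strategy as the paper: both use the three-term recurrence~\eqref{eq:q_i recurrence} transported to the Hadamard side, the vanishing $F_j\hat q_k=0$ for $j>k$ from Lemma~\ref{lem:F_j G^i=0}, and the key observation that $(nG)\circ F_{k+1}$ has no component in $\Harm_j(X)$ for $j\le k-1$---which you package as tridiagonality of pointwise multiplication by $\Harm_1(X)$, and the paper verifies inline via $((nG)\circ F_{k+1}\,\psi)(\bm x)=(F_{k+1}\cdot\zeta_{\bm x}(t)\psi)(\bm x)=0$. The only difference is presentational: you argue with matrix compressions $F_j(\cdot)F_j$, while the paper evaluates both sides on test functions in each $\Harm_i(X)$.
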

\begin{proof}
By Lemma \ref{lem:Q-poly->dis},
if $X$ has the structure of a $Q$-polynomial association scheme
with respect to the idempotents $\{F_i\}^{s}_{i=0}$,
then $F_s =\frac{1}{n}q_s((n G)^\circ)$.
We prove that if $F_s =\frac{1}{n}q_s((n G)^\circ)$, then
$X$ has the structure of a $Q$-polynomial association scheme
with respect to the idempotents $\{F_i\}^{s}_{i=0}$.
By Lemma~\ref{lem:dis->Q-poly},
it suffices to show
$F_k=\frac{1}{n}q_k((n G)^\circ)$ for each
$k\in \{2,3,\ldots ,s-1\}$.
First we check $F_{s-1}=\frac{1}{n}q_{s-1}((n G)^\circ)$.
From (\ref{eq:F q_i(G) I}) and
the recurrence (\ref{eq:q_i recurrence})
for the predegree polynomials,
we obtain the following two equalities:
\begin{equation}
\label{eq:F_s-1 q_s-1}
F_0+F_1+\cdots +F_{s-1}
=
\frac{1}{n}q_0((n G)^\circ)+
\frac{1}{n}q_1((n G)^\circ)+
\cdots+
\frac{1}{n}q_{s-1}((n G)^\circ),
\end{equation}
\begin{equation}
\label{eq:recc2}
(n G)\circ F_s =b^\ast_{s-1} \frac{1}{n}q_{s-1} ((n G)^\circ)
+ a^\ast_s F_s.
\end{equation}
Then for  $\varphi\in \Harm _s (X)$,
we have
\begin{eqnarray*}
\left(\frac{1}{n}q_{s-1} ((n G)^\circ) \varphi\right)(\bm{x})
&=&
\frac{1}{n}\sum_{\bm{y}\in X}\zeta _{\bm{x}}
(q_{s-1})(\bm{y})\varphi(\bm{y})\\
&=&(\zeta _{\bm{x}}(q_{s-1}), \varphi)=0
\end{eqnarray*}
for every $\bm{x}\in X$,
since $\zeta _{\bm{x}}(q_{s-1})$ is in $\pol_{s-1}(X)$.
For $\psi\in \bigoplus^{s-2}_{i=0}\Harm _i (X)$,
we have
\begin{eqnarray*}
\left((n G)\circ F_s \psi\right)(\bm{x})
&=&
\sum_{\bm{y}\in X}(\bm{x}\cdot \bm{y}) (F_s)_{\bm{x},\bm{y}} \psi(\bm{y})\\
&=&
\sum_{\bm{y}\in X} (F_s)_{\bm{x},\bm{y}} \zeta_{\bm{x}}(t)
(\bm{y})\psi(\bm{y})\\
&=&
(F_s \cdot \zeta_{\bm{x}}(t) \psi)(\bm{x})=0
\end{eqnarray*}
for every $\bm{x}\in X$,
since $\zeta_{\bm{x}}(t) \psi$ is in $\pol_{s-1}(X)$.
Thus, the equality (\ref{eq:recc2}) implies that
\[
\left(
\frac{1}{n}q_{s-1} ((n G)^\circ) \psi
\right)(\bm{x})=0.
\]
Hence, for every $f\in C(X)$, we have
$\frac{1}{n}q_{s-1} ((n G)^\circ) f \in \Harm_{s-1} (X)$.
Multiplying both sides of (\ref{eq:F_s-1 q_s-1}) from the left
by $F_{s-1}$,
we have
\begin{equation}
\label{eq:F_s-1}
F_{s-1}=F_{s-1} \frac{1}{n}q_{s-1} ((n G)^\circ),
\end{equation}
by Lemma \ref{lem:F_j G^i=0}.
For $\chi \in \Harm_{s-1} (X)$,
we put $\tilde{\chi }=\frac{1}{n}q_{s-1} ((n G)^\circ) \chi$.
Then, from (\ref{eq:F_s-1}), we have
\begin{eqnarray*}
\tilde{\chi}
&=&
F_{s-1} \tilde{\chi }
=
F_{s-1} \frac{1}{n}q_{s-1} ((n G)^\circ) \chi \\
&=&
F_{s-1} \chi =\chi .
\end{eqnarray*}
Thus, it follows that $\frac{1}{n}q_{s-1} ((n G)^\circ) =F_{s-1}$.

Let $2\le k\le s-2$ and
suppose now that $\frac{1}{n}q_{i} ((n G)^\circ) =F_{i}$ for
$s\ge i\ge k+1$.
Then we have the following two equalities:
\begin{equation}
\label{eq:F_k q_k}
F_0+F_1+\cdots +F_{k}
=
\frac{1}{n}q_0((n G)^\circ)+
\frac{1}{n}q_1((n G)^\circ)+
\cdots+
\frac{1}{n}q_{k}((n G)^\circ),
\end{equation}
\begin{equation}
\label{eq:recc k}
(n G)\circ F_{k+1} 
=b^\ast _{k} \frac{1}{n}q_{k} ((n G)^\circ)
+a^\ast _{k+1} F_{k+1}+ c^\ast _{k+2} F_{k+2}.
\end{equation}
As before, from $\deg q_k=k$,
we infer that
$\frac{1}{n}q_{k} ((n G)^\circ) \varphi=0$
for $\varphi\in \bigoplus^{s}_{i=k+1}\Harm _i(X)$.
By the same reasoning as above, (\ref{eq:recc k}) yields
$\frac{1}{n}q_{k} ((n G)^\circ) \psi=0$
for $\psi\in \bigoplus^{k-1}_{i=0}\Harm _i(X)$.
Hence, for any $f\in C(X)$, we have $\frac{1}{n}q_{k} ((n G)^\circ) f
\in \Harm _k(X)$.
For $\chi \in \Harm_k (X)$,
we can similarly prove $\frac{1}{n}q_{k} ((n G)^\circ) \chi =\chi$.
Then we have that $\frac{1}{n}q_{k} ((n G)^\circ) =F_{k}$
which, by induction, proves the result.
\end{proof}

We define an inner product in $M_X(\R)$ by,
for $R,S\in M_X(\R)$,
\begin{equation}
\label{eq:mat_inn}
\mprod{R}{S}=\tr ({}^t R S).
\end{equation}
Observe that, for $p,q \in \R[t]$, we have
\begin{equation}
\mprod{\frac{1}{n}p((n G)^\circ)}{\frac{1}{n}q((n G)^\circ)}
=\frac{1}{n^2}\sum_{\alpha \in A'(X)}
\kappa_\alpha p(\alpha)q(\alpha)
=\aprod{p}{q}.
\end{equation}
Consider
the Euclidean space $\mathfrak{T}^\ast$ with the inner product
(\ref{eq:mat_inn})
and the orthogonal projection
$\mathfrak{T}^\ast\rightarrow \mathfrak{A}^\ast$ denoted by
$S\mapsto \tilde{S}$.
Using in $\mathfrak{A}^\ast$ the orthogonal basis
$\{\frac{1}{n}q_i((n G)^\circ)\}^{s}_{i=0}$
of the predegree polynomials of $X$, this projection can be
expressed as
\begin{equation}
\tilde{S} =\sum^{s}_{i=0}
\frac{\mprod{S}{q_i((n G)^\circ )}}{n^2 q_i(m)}
q_i((n G)^\circ ).
\end{equation}

We put $\mu =\frac{1}{n}\sum_{\bm{x}\in X}m_s(\bm{x})$, for short.
\begin{proof}[Proof of Theorem~\ref{thm:excess theorem main}]
By Lemma~\ref{lem:F_j G^i=0} and since $\mprod{F_s}{F_s}
=\mprod{F_s}{I}
=\tr(F_s)
=\mu$,
the projection of $F_s$ onto $\mathfrak{A}^\ast$ is
\begin{eqnarray}
\label{eq:proj_F_s}
\tilde{F_s}
&=&
\sum^{s}_{i=0}
\frac{\mprod{F_s}{q_i((n G)^\circ )}}{n^2 q_i(m)}
q_i((n G)^\circ )
=\frac{\mprod{F_s}{q_s((n G)^\circ )}}{n^2 q_s(m)}q_s((n G)^\circ )
\\
&=&
\notag
\frac{\mprod{F_s}{H((n G)^\circ )}}{n^2 q_s(m)}q_s((n G)^\circ )
=\frac{\mprod{F_s}{I}}{n q_s(m)}q_s((n G)^\circ )\\
&=&
\notag
\frac{\mu}{n q_s(m)}q_s((n G)^\circ ).
\end{eqnarray}
Hence we get
\[
0\le 
\mprod{F_s}{F_s}-\mprod{\tilde{F_s}}{\tilde{F_s}}
=
\mu- \frac{\mu ^2}{q_s(m)}
=
\mu \left(
1-\frac{\mu }{q_s(m)}
\right).
\]
Since $\mu =\tr (F_s)=\rank (F_s)>0$,
this implies the desired inequality.
Moreover the equality is attained if and only
$F_s=\tilde{F_s}=\frac{1}{n}q_s((n G)^\circ)$.
By Lemma \ref{lem:F_s<->Q-poly} the desired result follows.
\end{proof}

\noindent
\textbf{Acknowledgments.} 
The author would like to thank 
Eiichi Bannai,
Edwin van Dam, 
Tatsuro Ito, 
Jack Koolen,
William J. Martin, 
Akihiro Munemasa
and 
Hajime Tanaka
for useful discussions and comments.
Part of this work was carried out while the author was visiting
the Mathematisches Forschungsinstitut Oberwolfach.
The institute kindly offered the
stay while his University was affected by the 2011 Tohoku earthquake.
The author is grateful to the MFO and its staffs for the offer,
financial support and warm hospitality.
The author also would like to thank the Leibniz Association for travel support.
The author is supported by the fellowship of the Japan Society for
the Promotion of Science.

\end{document}